\newcommand{\Reals}{\mathbb{R}}
\newcommand{\cB}{\mathcal{B}}
\newcommand{\Var}{\mathbb{V}}
\newcommand{\Nats}{\mathbb{N}_{+}}
\newcommand{\Exp}{\mathbb{E}}
\title{Estimating Computational Noise on Parametric Curves}
\author{Matt Menickelly}
\institute{Argonne National Laboratory}
\date{\today}
\begin{document}

\maketitle

\section{Introduction}
We consider \texttt{ECNoise} \cite{more2011estimating}, a practical tool for estimating the magnitude of noise in evaluations of a black-box function. 
Recent developments in numerical optimization algorithms have seen increased usage of \texttt{ECNoise} as a subroutine to provide a solver with noise level estimates, so that the solver might somehow proportionally adjust for noise. 
Particularly motivated by problems in computationally expensive derivative-free optimization,
we question a fundamental assumption made in the original development of \texttt{ECNoise}, particularly the assumption that the set of points provided to \texttt{ECNoise} must satisfy fairly restrictive geometric conditions (in particular, that the points be collinear and equally spaced). 
Driven by prior practical experience, we show that in many situations, noise estimates obtained from providing an arbitrary (that is, not collinear) geometry of points as input to \texttt{ECNoise} are often indistinguishable from noise estimates obtained from using the standard (collinear and equally spaced) geometry. 
We analyze this via parametric curves that interpolate the arbitrary input points (\Cref{sec:theory}). 
The analysis provides insight into the circumstances in which one can expect arbitrary point selection to cause significant degradation of \texttt{ECNoise} (\Cref{sec:understanding}). 
Moreover, the analysis suggests a practical means (the solution of a small mixed integer linear program) by which one can gradually adjust an initial arbitrary point selection to yield better noise estimates with higher probability (\Cref{sec:milp}). 

We begin by properly introducing \texttt{ECNoise} in \Cref{sec:ecnoise}, and then elaborating on our motivation in \Cref{sec:motivation}. 

\section{Introduction to \texttt{ECNoise}} \label{sec:ecnoise}
\texttt{ECNoise} \cite{more2011estimating} is a practical tool for estimating the magnitude of noise in evaluations of a (black-box) function $f:\Reals^n\to\Reals$, given reasonable assumptions on an underlying noise model. 
To make the noise model immediately concrete, 
we assume that there exists some $m$-times continuously differentiable ``ground truth" function $f_s:\Reals^n\to\Reals$. 
Observations of $f_s(x)$ can never be directly made given a query $x$;
instead, we only observe
\begin{equation}
\label{eq:noisemodel}
f(x) = f_s(x) + \xi(x),
\end{equation}
where $\xi(x)$ is a noise term. 
While $\xi$ may be a function of $x$, we make the simplifying assumption for the purpose of analysis that in a small neighborhood around a base point $y^0\in\Reals^n$, denoted $\cB(y^0,\delta) = \{x: \|x -y^0\| \leq \delta\}$,
each evaluation of $\xi(x)$ for $x\in\cB(y^0,\delta)$ is an independent, identically distributed realization of a random variable $\Xi(y^0,\delta)$. 
Given the noise model in \eqref{eq:noisemodel}, we define the \emph{noise level} of $f$ over the neighborhood $\cB(y^0,\delta)$
\begin{equation}
    \label{eq:noise_level}
    \epsilon_f := \sqrt{\Var\left[\Xi(y^0,\delta)\right]},
\end{equation}
where $\Var\left[\cdot\right]$ denotes the variance of a random variable. 

While the \texttt{MATLAB} software implementation of \texttt{ECNoise} involves several additional, and practical, heuristic extensions, the core idea behind \texttt{ECNoise} is conceptually simple. 
In essence, \texttt{ECNoise} follows a technique proposed by Hamming (see, e.g., \cite{hamming2012introduction}[Chapter 6]).
Given a \emph{base point} $y^0$, a number of points $m$, a \emph{differencing interval} $h$, and some unit direction $d\in\Reals^n$, an evaluation of $f$ is obtained at each point in the set $\{y^0 + jhd: j = 0, 1, \dots, m - 1\}$.
A \emph{differencing table} is then constructed using these function values.
Concretely, letting $y^j = y^0 + jhd$ for $j = 0,1,\dots,m$,
we define a $k$th-order difference of $f(y^j)$ via a base case
$\Delta^0 f(y^j) = f(y^j)$, and we then recursively let
\begin{equation}\label{eq:kth_order_diff}
\Delta^{k+1} f(y^j) := \Delta[\Delta^k f(y^j)] := \Delta^k f(y^{j+1}) - \Delta^k f(y^j)
\end{equation}
for all $j \leq m - 1 - k$.
The columns of a differencing table---see \Cref{table:differencing_example} for a concrete example---resulting from this recursion are composed of the $k$th-order differences $\{\Delta^k f(y^j): j = 0, 1, \dots, m-1-k\}$. 
Viewing each column of the table as $m-k$ samples of a $k$th-order difference,  one can estimate
the noise $\epsilon_f$, for any $k\geq 1$, as
\begin{equation}
    \label{eq:eps_f_approx}
    \epsilon_f\approx \displaystyle\sqrt{ \frac{\gamma_k}{m-k} \sum_{j=0}^{m-1-k} (\Delta^k f(y^j))^2}, \quad \gamma_k := (k!)^2/(2k)!.
\end{equation}
At first glance, the reasoning behind why \Cref{eq:eps_f_approx} serves as an approximation to \Cref{eq:noise_level} is opaque.
Nonetheless, we will recover this reasoning as a special case of analysis performed in this paper; see \Cref{thm:noise_estimate} and \Cref{thm:kcd_equallyspaced}.

    \begin{table}[h!]
    \centering
    \caption{\label{table:differencing_example} Example of a differencing table from a single trial of the experiment described in \Cref{sec:motivation}, with multiplicative noise distributed normally with zero mean and standard deviation $10^{-3}$.
    Because this experiment deals with multiplicative noise, we add an additional row to show that the normalized noise estimates are close to the standard deviation, which is reasonable given the definition of $\epsilon_f$ in \eqref{eq:noise_level}.}
\begin{tabular}{r|cccccc}
                            & $f=\Delta^0 f$ & $\Delta^1 f$ & $\Delta^2 f$ & $\Delta^3 f$ & $\Delta^4 f$ & $\Delta^5 f$ \\
                            \hline
$y^0$                       & 328.3654       & 0.9293       & -1.8141      & 2.8776       & -4.4118      & 6.8260       \\
$y^1$                       & 329.2947       & -0.8848      & 1.0635       & -1.5343      & 2.4141       &              \\
$y^2$                       & 328.4099       & 0.1787       & -0.4708      & 0.8799       &              &              \\
$y^3$                       & 328.5886       & -0.2921      & 0.4091       &              &              &              \\
$y^4$                       & 328.2965       & 0.1169       &              &              &              &              \\
$y^5$                       & 328.4134       &              &              &              &              &              \\ \hline
Value of \eqref{eq:eps_f_approx}:             &                & 0.4216       & 0.4477       & 0.4361       & 0.4250       & 0.4300       \\ \hline
(Value of \eqref{eq:eps_f_approx}) / $f(y^0)$ &                & 0.0013       & 0.0014       & 0.0013       & 0.0013       & 0.0013      
\end{tabular}
\end{table}

\section{Motivation}\label{sec:motivation}
A recent trend in some algorithms for numerical optimization has seen attempts to locally estimate the magnitude of noise using \texttt{ECNoise}. 
Such an application of \texttt{ECNoise} was originally suggested by its developers in \cite{more2012estimating}, where an optimal finite differencing interval in the presence of noise was suggested as $h^* = 8^{1/4}\sqrt{(\epsilon_f/\mu)}$, where $\mu$ is a coarse approximation to a second directional derivative in a given direction and $\epsilon_f$ is the noise level  \eqref{eq:noise_level}. 
Possible extensions and improvements to methods for determining optimal finite differencing intervals were suggested and tested in \cite{shi2022adaptive}.

Although \cite{more2012estimating} was concerned primarily with error bounds for finite differencing approximations of directional derivatives of noisy functions, 
the authors were transparent in their development that their estimation procedures could be incorporated in optimization algorithms. 
More recently, 
\cite{berahas2019derivative} considered a quasi-Newton approach to derivative-free optimization that incorporates \texttt{ECNoise} and logic similar to that in \cite{more2012estimating} to determine reasonable finite differencing parameters. 
The quasi-Newton method of \cite{berahas2019derivative} re-estimates the noise level via a new call to \texttt{ECNoise} any time a line search fails. 
Extensive numerical tests for these methods were performed in \cite{shi2023numerical}. 
In derivative-based optimization, a noise-tolerant (L)BFGS method was developed in \cite{shi2022noise}, which suggested using \texttt{ECNoise} to provide an estimate of function value noise as an algorithmic input. 

Our primary motivation in writing this manuscript comes from recent work in model-based derivative-free trust-region methods in the presence of noise \cite{larson2024novel}. 
In this setting, we iteratively construct polynomial interpolation models of a noisy function. 
Using results resembling those for linear interpolation in \cite{berahas2022theoretical} and a characterization of noise employed in a noisy trust-region method analyzed in \cite{cao2023first} that depends on a parameter functionally similar to $\epsilon_f$ in \eqref{eq:noise_level}, we develop a convergent method that fundamentally depends, in each iteration, on providing an estimate to $\epsilon_f$. 
Such an estimation is clearly a role that \texttt{ECNoise} can play within this algorithm. 
However, our forthcoming work was motivated by the noisy optimization problems that arise in variational quantum algorithms, which is within the realm of what one would consider computationally expensive. 
Thus, requiring $m$ function evaluations per run of \texttt{ECNoise} (noting that $m$ is typically chosen between 6 and 12 in practice) may become the dominant cost of an algorithm that specifies budgets in terms of the number of function evaluations, as opposed to the number of iterations. 
Thus, a natural and reasonable question in this setting is 
``Can one reuse noisy function values obtained from an \emph{arbitrary} (that is, not collinear and equally spaced) set of previously evaluated points as input to \texttt{ECNoise}?"
In our setting, the arbitrary set of points comes naturally from the derivative-free trust-region framework (see, e.g., \cite{conn2009introduction}, \cite{larson2019derivative}[Section 2.2]), which stores and maintains a set of interpolation nodes and corresponding (noisy) function evaluations for the purpose of model construction. 

In the remainder of this manuscript we demonstrate that the use of such arbitrary point sets is both practically and theoretically justified.
We provide an initial motivating example. 
We will use the  same stochastic test function initially considered in \cite{more2011estimating}, namely,
$f(x) = (x^\top x)(1 + \xi)$,
that is, a quadratic function perturbed by multiplicative noise. 
This simple test function is of the form \eqref{eq:noisemodel} where the ground truth is given as $f_s(x) = x^\top x$ and the nonconstant noise function is given as $\xi(x)$, which notably vanishes at $x=0$. 
As in \cite{more2011estimating}, we will consider
and $\xi\sim 10^{-3}\mathcal{N}(0,1)$,
where $\mathcal{N}(\mu,\sigma^2)$ denotes a normal distribution with mean $\mu$ and variance $\sigma^2$. 

We perform a simple experiment like the one in \cite{more2011estimating}.
We first choose a differencing parameter (we plot $h=10^{-6}$) and number of points $m\in\{6, 12, 24\}$. 
In each trial of the experiment, we generate a random base point $y^0$ uniformly at random from the hypercube defined by $\{x\in\Reals^{10}: \|x\|_{\infty} \leq 10\}$ and a random unit-length direction $d\in\Reals^{10}$, chosen by normalizing a random zero-mean Gaussian vector. 
We then evaluate the noisy function $f(x)$ at each of $\{y^0 + jhd: j=0,1,\dots, m-1\}$ and run \texttt{ECNoise} on this set of function values to obtain a noise estimate; this corresponds to the ``standard" use of \texttt{ECNoise}, as intended in the original work \cite{more2011estimating}.
In each trial of the experiment, we then keep the same base point $y^0$ but generate a set of points
$\{y^0 + d_j: j=0,1,\dots,m-1\}$, where $d_j$ is drawn uniformly at random from the hypercube defined by $\{d\in\Reals^{10}: \|d\|_{\infty} \leq h\}$. 
This latter random selection represents an arbitrary (not collinear) set of points that are still somehow bounded by a radius $h$. 
This set of function values is then also provided to \texttt{ECNoise} to provide a separate noise estimate. 
We then compare the two distributions of relative noise estimates ($\xi(y^0)/f(y^0)$) obtained over $10^4$ many trials. 
In \Cref{fig:histogram} we show splined histograms of the results of running this experiment once with $h=10^{-6}$ and all three values of $m$. 
We emphasize how similar these distributions appear.
Additionally, a two-sample Kolmogorov--Smirnov test fails to reject the null hypothesis that distributions coming from the two distinct modes of point generation are equal. For the $m=6, 12, 24$ cases respectively, the corresponding $p$-values are $p = 0.9051, 0.8421, 0.2791$.

\begin{figure}
    \centering
    \includegraphics[width=.4\linewidth]{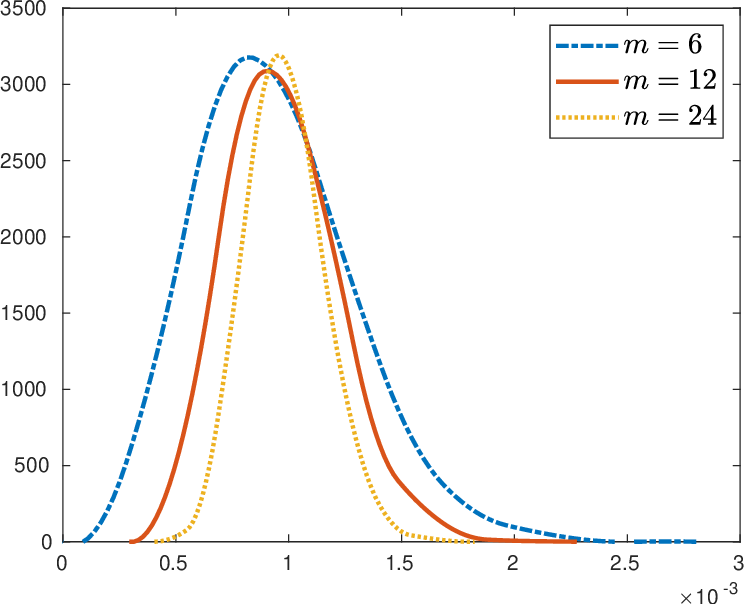}
    \includegraphics[width=.4\linewidth]{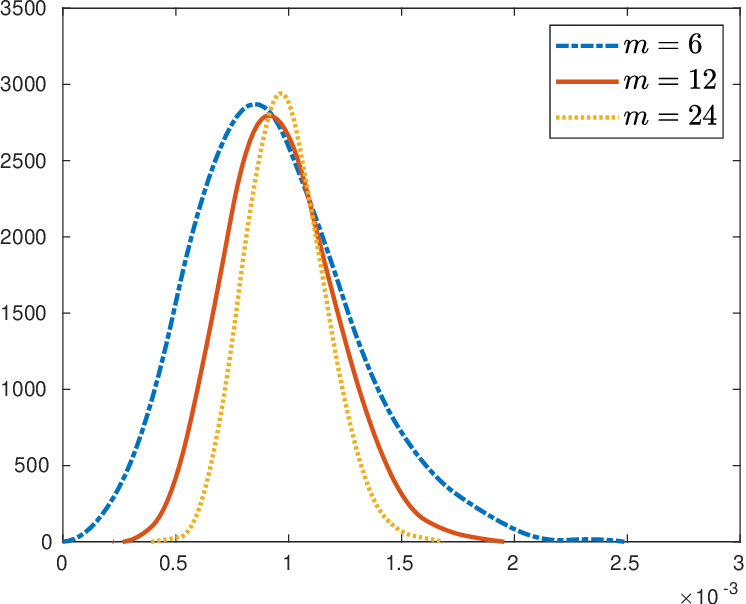}

    \caption{\label{fig:histogram} Splined histograms of relative noise values returned from running the described experiment. 
    Left figure corresponds to running \texttt{ECNoise} in ``standard" mode; right figure corresponds to running \texttt{ECNoise} with arbitrary function values.}
\end{figure}

For brevity in this short manuscript, we do not provide further evidence for this phenomenon. 
We do, however, comment that the seemingly unimportant nature of the geometry of points provided to \texttt{ECNoise} is not restricted to this simple problem, and we have observed this on a variety of noisy problems. 
In the next section we provide a framework that describes arbitrary point sets as lying on a parametric curve defined by a single time variable, and we prove results concerning the output of \texttt{ECNoise} under this interpretation. 
We demonstrate that these results recover the results of \texttt{ECNoise} proven in \cite{more2011estimating} as a special case.

\section{\texttt{ECNoise} on Parametric Curves}\label{sec:theory}
Suppose we are given a set of points $Y=\{y^0, y^1, \dots, y^m\}\subset\Reals^n$. 
We will construct a specific parametric curve $p(t): [0,m]\to\Reals^n$ for some $m\in\Nats$.
Our construction will ensure that $p(t)$ is smooth, that is, infinitely times continuously differentiable. 
We want $p(t)$ to interpolate at each point in $Y$; that is, we want
\begin{equation}
    \label{eq:p_interpolates}
    p(j) = y^j \quad \forall j=0,1,\dots,m.
\end{equation}
We will provide the rest of the construction later, after we have established a few key theorems that depend only on $p$ being smooth and satisfying \eqref{eq:p_interpolates}. 
Extending the notation in \eqref{eq:kth_order_diff}, we denote a $k$th-order difference of $f(p(t))$ recursively with a base case
$\Delta^0 f(p(t)) = f(p(t))$ and the recursively defined
$$\Delta^{k+1} f(p(t)) = \Delta[\Delta^k f(t)] = \Delta^k f(p(t + 1)) - \Delta^k f(p(t)).$$

The proof of the following result is essentially unchanged from \cite{more2011estimating}[Theorem 2.2].
\begin{theorem}\label{thm:variance}
    If $\{\xi(p(j)) : j = 0,1,\dots,m\}$ are iid realizations of the random variable $\Xi(y^0,\delta)$,
    then
    $$\gamma_k \Var\left[\Delta^k\xi(p(0))\right] = \epsilon_f^2 \quad \forall k=1,2,\dots,m,$$
    where
    $\gamma_k = (k!)^2/(2k)!$ for each $k=1,2,\dots,m$.
\end{theorem}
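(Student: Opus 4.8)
The plan is to reduce the statement to a single explicit linear representation of the $k$th-order difference and then invoke a classical binomial identity; the geometry of the curve $p$ plays no role whatsoever, which is the conceptual point. First I would unwind the recursion defining $\Delta^k \xi(p(t))$ to obtain the closed form
$$\Delta^k \xi(p(0)) = \sum_{i=0}^{k} (-1)^{k-i}\binom{k}{i}\,\xi(p(i)).$$
I would establish this by induction on $k$: the base case $k=0$ is immediate, and the inductive step follows by substituting the assumed formula into $\Delta^{k+1}\xi(p(0)) = \Delta^k\xi(p(1)) - \Delta^k\xi(p(0))$, reindexing the first sum, and collecting terms via Pascal's rule $\binom{k}{i-1}+\binom{k}{i} = \binom{k+1}{i}$.

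With this representation in hand, the second step is to compute the variance. Because $\{\xi(p(j))\}_{j=0}^m$ are iid realizations of $\Xi(y^0,\delta)$, each has variance $\epsilon_f^2$ by the definition \eqref{eq:noise_level}, and all pairwise covariances vanish. Hence the variance of the linear combination above is the sum of its squared coefficients times the common variance,
$$\Var\left[\Delta^k\xi(p(0))\right] = \epsilon_f^2 \sum_{i=0}^{k}\binom{k}{i}^2.$$

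The final step is the combinatorial identity $\sum_{i=0}^k \binom{k}{i}^2 = \binom{2k}{k} = (2k)!/(k!)^2$, which is the Vandermonde identity specialized by symmetry $\binom{k}{i}=\binom{k}{k-i}$, or equivalently the extraction of the coefficient of $x^k$ from $(1+x)^k(1+x)^k$. Substituting and recognizing that $\gamma_k = (k!)^2/(2k)! = 1/\binom{2k}{k}$ yields $\gamma_k \Var[\Delta^k\xi(p(0))] = \epsilon_f^2$, as claimed, for every $k=1,2,\dots,m$.

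I do not anticipate a genuine obstacle, since the argument rests only on linearity of the difference operator and independence of the samples, not on any smoothness or interpolation property of $p$ beyond its being defined at the integer nodes. The one place requiring slight care is the bookkeeping of signs and binomial coefficients in the induction; notably, however, the variance computation depends only on the \emph{squared} coefficients, so the precise sign pattern is immaterial to the final conclusion, and it suffices to track the magnitudes $\binom{k}{i}$.
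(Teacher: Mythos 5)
Your proposal is correct and follows essentially the same route as the paper's proof: the same closed-form expansion of $\Delta^k\xi(p(0))$ established by induction with Pascal's identity (yours is merely the paper's formula reindexed via $i = k-j$), the same variance computation for a linear combination of iid terms, and the same combinatorial identity $\sum_{i=0}^k \binom{k}{i}^2 = (2k)!/(k!)^2$. The only cosmetic difference is that the paper states the induction for a general shift $t$ (which is what formally licenses the step $\Delta^{k+1}\xi(p(0)) = \Delta^k\xi(p(1)) - \Delta^k\xi(p(0))$ that you use implicitly at $t=0$), but this changes nothing of substance.
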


\begin{proof}
    We will first show by induction that
    \begin{equation}
    \label{eq:induction}
    \Delta^k\xi(p(t)) = \displaystyle\sum_{j=0}^k (-1)^j\binom{k}{j} \xi(p(t + k-j)) \quad \forall k=0,1,\dots,m.
    \end{equation}
    Indeed, the base cases of $k=0$ and $k=1$ are trivial. 
    To show that \eqref{eq:induction} holds in general, suppose \eqref{eq:induction} holds for $k-1 < m$, that is,
    $$\Delta^{k-1}\xi(p(t)) = \displaystyle\sum_{j=0}^{k-1} (-1)^j\binom{k-1}{j} \xi(p(t + k-1-j)). $$
    Then, by the definition of $\Delta^k$,
    $$
    \begin{array}{rl}
    \Delta^k \xi(p(t)) = & \Delta^{k-1}\xi(p(t+1)) - \Delta^{k-1}\xi(p(t))\\
    = & \displaystyle\sum_{j=0}^{k-1} (-1)^j\binom{k-1}{j} 
    \xi(p(t + k-j)) - 
    \displaystyle\sum_{j=0}^{k-1} (-1)^j\binom{k-1}{j}
    \xi(p(t+k-1-j))\\
    =& 
    \displaystyle\sum_{j=0}^{k-1} (-1)^j\binom{k-1}{j} 
    \xi(p(t + k-j))
    -
    \displaystyle\sum_{j=1}^{k} (-1)^{j-1}\binom{k-1}{j-1}
    \xi(p(t+k-j))
    \\
    =&
    \displaystyle\sum_{j=0}^{k-1} (-1)^j\binom{k-1}{j} 
    \xi(p(t + k-j))
    +
    \displaystyle\sum_{j=1}^{k} (-1)^{j}\binom{k-1}{j-1}
    \xi(p(t+k-j))\\
    =&
    (-1)^0\binom{0}{0}\xi(p(t+k))
    + \displaystyle\sum_{j=1}^{k-1} 
    \left(\binom{k-1}{j} + \binom{k-1}{j-1}\right)  \xi(p(t+k-j))
    + (-1)^k\binom{k-1}{k-1}\xi(p(t))\\
    = & 
    (-1)^0\binom{k}{0}\xi(p(t+k))
    + \displaystyle\sum_{j=1}^{k-1} 
    \binom{k}{j}   \xi(p(t+k-j))
    + (-1)^k\binom{k}{k}\xi(p(t)),
    \\
    \end{array}
    $$
    where the last equality employed Pascal's identity. This proves the induction hypothesis \eqref{eq:induction}. 

    By the iid assumption and well-known properties of variance, it follows from \eqref{eq:induction} that
    $$\Var[\Delta^k \xi(0)] = \displaystyle\sum_{j=0}^k \binom{k}{j}^2 \Var[\xi(p(k-j))] 
    = \epsilon_f^2 \displaystyle\sum_{j=0}^k \binom{k}{j}^2. 
    $$
    The desired result now follows from a standard combinatorial identity,
    $$\displaystyle\sum_{j=0}^k\binom{k}{j}^2 = \displaystyle\frac{(2k)!}{(k!)^2}.$$
\end{proof}

\begin{theorem}
\label{thm:noise_estimate}
    Suppose $\{\xi(p(j)): j=0,1,\dots,m\}$ are iid realizations of the random variable $\Xi(y^0,\delta)$.
    If $f_s$ is continuous at $p(0)$, then 
    $$\epsilon_f^2 = \gamma_k \displaystyle\lim_{h\to 0^+} \Exp[\Delta^k f(p(0))^2]  \quad k=1,2,\dots,m,$$
    where $h = \max\{\|y^1 - y^0\|,\|y^2 - y^1\|, \dots, \|y^m - y^{m-1}\|\}$. 
\end{theorem}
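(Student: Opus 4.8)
The plan is to split the $k$th-order difference of the observed function into a deterministic contribution from the ground truth $f_s$ and a stochastic contribution from the noise $\xi$, show that the former is negligible in the limit $h\to 0^+$, and recover $\epsilon_f^2$ from the latter using \Cref{thm:variance}. Since $\Delta^k$ is a fixed linear combination of shifted evaluations with the coefficients made explicit in \eqref{eq:induction}, the noise model \eqref{eq:noisemodel} gives $\Delta^k f(p(0)) = \Delta^k f_s(p(0)) + \Delta^k \xi(p(0))$, where the first term is deterministic and the second is random. Expanding the square and taking expectations produces $(\Delta^k f_s(p(0)))^2$, a cross term, and $\Exp[(\Delta^k \xi(p(0)))^2]$.

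The key simplification is that the difference operator annihilates any constant mean. Applying \eqref{eq:induction} at $t = 0$ and taking expectations, $\Exp[\Delta^k \xi(p(0))] = \Exp[\Xi(y^0,\delta)]\sum_{j=0}^k (-1)^j \binom{k}{j} = 0$ for every $k\ge 1$, since $\sum_{j=0}^k (-1)^j \binom{k}{j} = (1-1)^k = 0$. Hence the cross term vanishes and $\Exp[(\Delta^k \xi(p(0)))^2] = \Var[\Delta^k \xi(p(0))] = \epsilon_f^2/\gamma_k$ by \Cref{thm:variance}. Multiplying through by $\gamma_k$ yields the exact identity $\gamma_k\Exp[\Delta^k f(p(0))^2] = \gamma_k(\Delta^k f_s(p(0)))^2 + \epsilon_f^2$, which holds for every fixed geometry.

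It then remains to show that the deterministic remainder $\gamma_k(\Delta^k f_s(p(0)))^2$ vanishes as $h\to 0^+$; this is the only step invoking the hypothesis on $f_s$. The telescoping bound $\|y^j - y^0\| \le \sum_{i=0}^{j-1}\|y^{i+1} - y^i\| \le mh$ forces every node $p(j) = y^j$ to collapse onto $p(0)$ as $h\to 0^+$. Continuity of $f_s$ at $p(0)$ then gives $f_s(p(j)) = f_s(p(0)) + o(1)$, so applying \eqref{eq:induction} to $f_s$ and using the same alternating-sum identity yields $\Delta^k f_s(p(0)) = f_s(p(0))\cdot 0 + o(1)\to 0$. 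Passing to the limit in the identity above delivers the claim.

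The main obstacle is conceptual rather than computational: the argument hinges on recognizing that the alternating binomial weights sum to zero, which simultaneously eliminates the (possibly nonzero) mean of $\Xi$ in the cross term and the limiting constant value of $f_s$ in the remainder. This is precisely why only continuity of $f_s$ at $p(0)$, rather than its full $m$-fold differentiability, is required here. The remaining technical point is that $f_s(p(j))\to f_s(p(0))$ need only hold across the finitely many indices $j\le k\le m$, so the individual $o(1)$ terms combine without difficulty.
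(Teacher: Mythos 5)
Your proof is correct and follows essentially the same route as the paper: decompose $\Delta^k f(p(0))$ into $\Delta^k f_s(p(0)) + \Delta^k\xi(p(0))$, kill the cross term via $\sum_{j=0}^k(-1)^j\binom{k}{j}=0$, invoke \Cref{thm:variance} for the variance term, and let continuity drive $\Delta^k f_s(p(0))\to 0$ as $h\to 0^+$. The only (cosmetic) differences are that you justify the vanishing alternating sum by the binomial theorem directly, where the paper invokes a Taylor expansion of $(1+s)^k$ about $s=-1$, and your telescoping bound $\|y^j-y^0\|\le mh$ makes the final limiting step slightly more explicit than the paper's appeal to continuity of $f_s\circ p$.
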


\begin{proof}
    By the iid assumption and \eqref{eq:induction}, 
    \begin{equation}\label{eq:zeromean}
    \Exp[\Delta^k \xi(p(0))] =
    \Exp_{\xi\sim\Xi(y^0,\delta)}[\xi]\displaystyle\sum_{j=0}^k (-1)^j\binom{k}{j} = 0.
    \end{equation}
    The last equality in \eqref{eq:zeromean} follows by noting that the summation in \eqref{eq:zeromean} can be viewed as the $k$th-order Taylor expansion of the function $(1+s)^k$ about $s=-1$. 
    In other words, we have shown that the $k$th-order difference at $f(p(0))$ has zero mean. 
    With \eqref{eq:zeromean} in hand, 
    \begin{equation}\label{eq:int2}
    \begin{array}{rl}
    \Exp[\Delta^k f(p(0))^2] = &
    \Delta^k f_s(p(0))^2 + 2\Delta^k f_s(p(0))\Exp[\Delta^k\xi(p(0))] +
    \Exp[\Delta^k\xi(p(0))^2]\\
    =& \Delta^k f_s(p(0))^2 + \Var[\Delta^k\xi(p(0))]\\
    = & \Delta^k f_s(p(0))^2 + \displaystyle\frac{\epsilon_f^2}{\gamma_k},
    \end{array}
    \end{equation}
    where the last equality used the result of \Cref{thm:variance}.
    The result follows by multiplying both sides of \eqref{eq:int2} by $\gamma_k$
    and then noting that if 
    $h\to 0^+$, then necessarily $\|y^{j+1} - y^j\| \to 0^+$ for all $j=0,1,\dots,m-1$,
    and so by the definition of $k$th differences and the assumed continuity of $f_s \circ p$, 
    $\Delta^k f_s(p(0)) \to 0$. 
\end{proof}

\Cref{thm:noise_estimate} provides a guarantee that, given $m+1$ points and provided $f_s$ (and our parametric curve, $p$, interpolating those $m+1$ points) is continuous, then the noise level $\epsilon_f$ is expressible as a known constant multiple of the square root of the analytical expectation of the square of any $k$th difference of $f\circ p$ for $k\leq m$. 
By itself, this theorem says nothing about the quality of this approximation (for instance, the variance of this estimator). 
As in \cite{more2011estimating}, we can demonstrate proportionally stronger results concerning approximation quality as a function of the assumed smoothness of $f_s$. 
By our smoothness assumption on $p(t)$, we have
 by application of the chain rule that
\begin{equation}\label{eq:first_derivative}
\begin{array}{rl}
(f_s \circ p)^{(1)}(t) = & \displaystyle\sum_{i=1}^n \partial_i f_s(p(t))[p^{(1)}(t)]_i,
\end{array}
\end{equation}
where $\partial_i$ denotes the $i$th partial derivative and where we use parenthetical superscripts for ``time" derivatives (those derivatives with respect to $t$)
and where we let $[\cdot]_i$ denote the $i$th entry of a vector. 

We pause to consider the special case of equally spaced points $\{y^j = y^0 + hjd, j=0,1,\dots,m\}$, given a vector $d$ satisfying $\|d\|=1$ and some $h>0$; this is precisely the case considered in \cite{more2011estimating}.
In this case, the most natural choice of $p(t)$ satisfying the interpolation condition \eqref{eq:p_interpolates} is given by
\begin{equation}
    \label{eq:equallyspaced}
    p(t) = y^0 + thd.
\end{equation}
Such a $p$ is obviously smooth; and in view of \eqref{eq:first_derivative}, we see that 
$f_s^{(1)}(p(t)) = h \langle \nabla f_s(p(t)), d\rangle$, that is,
the first derivative of $f_s\circ p$ is the directional derivative of $f_s$ in the direction $d$ scaled by $h$.
Moreover, in this special case of equally spaced points,
we have that the natural choice of $p(t)$ in \eqref{eq:equallyspaced} satisfies $p^{(2)}(t)=0$ everywhere. 
Thus, in the special case, we obtain from repeated applications of the chain rule that 
\begin{equation}
    \label{eq:kth_equallyspaced}
    (f_s \circ p)^{(k)}(t) = h^k \displaystyle\sum_{i_1=1}^n 
    \displaystyle\sum_{i_2=1}^n
    \dots
    \displaystyle\sum_{i_k=1}^n
    \partial_{i_1}\partial_{i_2}\dots\partial_{i_k} f_s(p(t)) 
    [d]_{i_1}[d]_{i_2}\dots [d]_{i_k}.
\end{equation}
This observation will momentarily lead to the same result reached in the second part of \cite{more2011estimating}[Theorem 2.3], restated in \Cref{thm:kcd_equallyspaced}. We first state a lemma. 

\begin{lemma}
\label{lem:mvt}
    Suppose a function $g:\Reals\to\Reals$ is $k$-times differentiable on $[0,m]$. Then there exists $\zeta\in (0,m)$ satisfying
    \begin{equation}\label{eq:dividif}
\Delta^k g(0) = g^{(k)}(\zeta).
\end{equation}
\end{lemma}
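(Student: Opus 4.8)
The plan is to prove this classical relation between the unit-spaced forward difference and the $k$th derivative by passing through polynomial interpolation and then invoking Rolle's theorem repeatedly; this is precisely the mean value theorem for divided differences, specialized to the integer nodes $0,1,\dots,k$, which lie in $[0,m]$ since $k\le m$ in our setting.

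First I would record that the combinatorial expansion \eqref{eq:induction}, whose derivation used only linearity and the recursion defining $\Delta$, applies verbatim to any real function, so that $\Delta^k g(0) = \sum_{j=0}^k (-1)^j \binom{k}{j} g(k-j)$. Next I would introduce the unique polynomial $P$ of degree at most $k$ interpolating $g$ at the nodes $0,1,\dots,k$, written in Newton forward-difference form $P(x) = \sum_{i=0}^k \Delta^i g(0)\binom{x}{i}$, where $\binom{x}{i}$ denotes the degree-$i$ falling-factorial polynomial $x(x-1)\cdots(x-i+1)/i!$. That this form interpolates $g$ at the integer nodes follows from the same finite-difference bookkeeping as \eqref{eq:induction}. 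Only the $i=k$ term contributes to degree $k$, and since $\binom{x}{k}$ has leading term $x^k/k!$, differentiating $k$ times collapses $P$ to the constant $P^{(k)}(x) \equiv \Delta^k g(0)$.

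I would then set $\phi := g - P$, which is $k$-times differentiable on $[0,m]$ and vanishes at the $k+1$ nodes $0,1,\dots,k$. Applying Rolle's theorem to $\phi$ produces at least $k$ zeros of $\phi^{(1)}$ in $(0,k)$, a further application yields at least $k-1$ zeros of $\phi^{(2)}$, and so on, until after $k$ steps we obtain a single point $\zeta \in (0,k) \subseteq (0,m)$ with $\phi^{(k)}(\zeta) = 0$. Since $\phi^{(k)} = g^{(k)} - P^{(k)} = g^{(k)} - \Delta^k g(0)$, this yields exactly $g^{(k)}(\zeta) = \Delta^k g(0)$, as claimed.

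The main point requiring care is the iterated Rolle argument and its regularity hypotheses: at each of the first $k-1$ differentiations the relevant derivative $\phi^{(i)}$ is itself differentiable and hence continuous, so Rolle's theorem applies on each subinterval, and only at the final step do I invoke continuity of $\phi^{(k-1)}$ on the closed interval (guaranteed since it is differentiable) together with differentiability on the open interval (supplied by the existence of $g^{(k)}$). In particular, nowhere do I need $g^{(k)}$ to be \emph{continuous}, which matches the bare $k$-times-differentiable hypothesis. The only other step to verify is the identification $P^{(k)} \equiv \Delta^k g(0)$, which is routine once the Newton interpolation form is in hand.
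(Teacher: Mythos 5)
Your proof is correct and follows essentially the same route the paper itself relies on: the paper does not prove \Cref{lem:mvt} but defers to the classical Newton-form interpolation argument in the cited reference, noting in a footnote that the divided difference $g[0,1,\dots,k]$ equals $\Delta^k g(0)/k!$ --- precisely the identity your Gregory--Newton form $P(x)=\sum_{i=0}^k \Delta^i g(0)\binom{x}{i}$ encodes, with the $1/k!$ absorbed into the falling factorials. Your iterated-Rolle completion, including the correct observation that only bare $k$-times differentiability (and not continuity of $g^{(k)}$) is required, is exactly the standard argument the paper points to, just written out in full.
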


The proof of \Cref{lem:mvt} is not trivial, 
but the result is fairly well known, and proving it here would be distracting. 
A proof involving the Newton form of an interpolating polynomial may be found surrounding \cite{quarteroni2010numerical}[Equation 8.21].\footnote{We comment additionally for the reader that checks this citation that because our parameteric curve $p$ interpolates $\{y^0, y^1, \dots, y^m\}$ at equally spaced points $t = 0,1,\dots m$, it is easily proved by induction that the $k$th \emph{divided difference}, denoted $g[0,1,\dots,k]$ in \cite{quarteroni2010numerical}, satisfies the relation
$g[0,1,\dots,k] = \frac{\Delta^k g(0)}{k!}.$}

\begin{theorem}
\label{thm:kcd_equallyspaced}
    Suppose $\{\xi(p(j)): j = 0,1,\dots,m\}$ are iid realizations of the random variable $\Xi(y^0,\delta)$. 
    Additionally, suppose $y^j = y^0 + hjd$ for each of $j=0,1,\dots,m$,
    and define $p(t)$ via \eqref{eq:equallyspaced}. 
    If $f_s$ is $k$-times continuously differentiable on $[0,m]$, then 
    \begin{equation}\label{eq:rate}
    \displaystyle\lim_{h\to 0^+} \frac{\gamma_k\mathbb{E}[\Delta^k(f(y^0))^2] - \epsilon_f^2}{h^{2k}} = \gamma_k f_s^{(k)}(0; d)^2,
    \end{equation}
    where we have made the notational choice
    $$f_s^{(k)}(\zeta; d) = 
    \displaystyle\sum_{i_1=1}^n 
    \displaystyle\sum_{i_2=1}^n
    \dots
    \displaystyle\sum_{i_k=1}^n
    \partial_{i_1}\partial_{i_2}\dots\partial_{i_k} f_s(p(\zeta)) 
    [d]_{i_1}[d]_{i_2}\dots [d]_{i_k}.$$
\end{theorem}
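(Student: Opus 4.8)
The plan is to reduce the claim to a short calculation built on the identity already established inside the proof of \Cref{thm:noise_estimate}, together with the mean-value characterization of differences in \Cref{lem:mvt}. First I would recall \eqref{eq:int2}, which after multiplying through by $\gamma_k$ reads $\gamma_k\Exp[\Delta^k f(p(0))^2] = \gamma_k \Delta^k f_s(p(0))^2 + \epsilon_f^2$. Since the interpolation condition \eqref{eq:p_interpolates} gives $p(j)=y^j$, we have $\Delta^k f(p(0)) = \Delta^k f(y^0)$, so the numerator appearing in \eqref{eq:rate} is exactly $\gamma_k \Delta^k f_s(p(0))^2$. The whole problem therefore reduces to evaluating $\lim_{h\to 0^+} \gamma_k \Delta^k f_s(p(0))^2 / h^{2k}$.

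Next I would apply \Cref{lem:mvt} to the scalar function $g = f_s\circ p$, which is $k$-times continuously differentiable on $[0,m]$ by hypothesis. This produces a point $\zeta = \zeta(h)\in(0,m)$---whose dependence on $h$ I would track explicitly---such that $\Delta^k f_s(p(0)) = (f_s\circ p)^{(k)}(\zeta)$. In the equally spaced case $p(t) = y^0 + thd$, the repeated chain-rule computation already recorded in \eqref{eq:kth_equallyspaced} gives $(f_s\circ p)^{(k)}(\zeta) = h^k f_s^{(k)}(\zeta; d)$, so that $\Delta^k f_s(p(0)) = h^k f_s^{(k)}(\zeta;d)$. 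Substituting into the quotient and cancelling the factor $h^{2k}$ leaves $\gamma_k f_s^{(k)}(\zeta(h); d)^2$, with no explicit $h$-dependence remaining outside of $\zeta$.

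Finally I would pass to the limit. Because $\zeta(h)\in(0,m)$ is bounded and $\|d\|=1$, the evaluation point satisfies $\|p(\zeta(h)) - p(0)\| = \zeta(h)\,h \le mh \to 0$ as $h\to 0^+$, so $p(\zeta(h))\to p(0)$. Continuity of the $k$th-order partial derivatives of $f_s$ (guaranteed by the $C^k$ assumption) then yields $f_s^{(k)}(\zeta(h); d)\to f_s^{(k)}(0;d)$, and hence $\gamma_k f_s^{(k)}(\zeta(h);d)^2 \to \gamma_k f_s^{(k)}(0;d)^2$, which is precisely the claimed limit in \eqref{eq:rate}.

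The only genuinely delicate point is the handling of the mean-value argument $\zeta$: it is not a fixed number but varies with $h$, and the argument relies on it remaining in the bounded interval $(0,m)$ so that the evaluation point $p(\zeta(h))$ collapses onto $p(0)$. Once this is observed, continuity of the derivatives closes the proof, and no quantitative control on how fast $\zeta(h)$ moves is required---only that $p(\zeta(h))\to p(0)$.
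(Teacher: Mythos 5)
Your proposal is correct and follows essentially the same route as the paper's proof: starting from \eqref{eq:int2}, applying \Cref{lem:mvt} to $g = f_s\circ p$, invoking \eqref{eq:kth_equallyspaced} to pull out the factor $h^{2k}$, and passing to the limit by continuity of the $k$th-order partials. If anything, your handling of the final step is more careful than the paper's, which asserts ``$\zeta\to 0$ as $h\to 0^+$'' (not actually guaranteed by \Cref{lem:mvt}), whereas your observation is the correct and sufficient one: $\zeta(h)$ merely stays in the bounded interval $(0,m)$ while the evaluation point satisfies $p(\zeta(h)) = y^0 + \zeta(h)hd \to p(0)$ because $\zeta(h)h \leq mh \to 0$.
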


\begin{proof}
Because $f_s \circ p$ is $k$-times differentiable on $[0,m]$, we get from \Cref{lem:mvt} that there exists
$\zeta\in(0,m)$ so that \eqref{eq:dividif} holds.
Thus, continuing from \eqref{eq:int2} and sequentially employing \eqref{eq:dividif}  and \eqref{eq:kth_equallyspaced}, 
 \begin{equation}\label{eq:int3}
 \begin{array}{rl}
    \Exp[\Delta^k f(p(0))^2] = &  
    \Delta^k f_s(p(0))^2 + \displaystyle\frac{\epsilon_f^2}{\gamma_k}\\
    = & (f_s \circ p)^{(k)}(\zeta)^2  + \displaystyle\frac{\epsilon_f^2}{\gamma_k}\\
    = & h^{2k} \left[
    f_s^{(k)}(\zeta; d)
    \right]^2 + \displaystyle\frac{\epsilon_f^2}{\gamma_k}.
\end{array}
\end{equation}
The result follows after algebraic manipulation and noting that $\zeta\to 0$ as $h\to 0^+$. 
\end{proof}

\Cref{thm:kcd_equallyspaced} effectively recovers the second part of the result of \cite{more2011estimating}[Theorem 2.3]
because if we suppose $f_s$ has a one-dimensional domain and we choose $d=1\in\Reals^1$, then $f_s^{(k)}(0; d)$ would be simply $f_s^{(k)}(y^0)$. 

In the general case of an arbitrary set of points $Y$, we will not be able to construct a $k$-times differentiable curve $p$ such that the second derivative $p^{(2)}(t) = 0$ for all $t\in[0,m]$. Moreover, it will not hold that any higher-order derivative satisfies $p^{(k)}(t)=0$ for all $t\in[0,m]$. 
It is precisely these zero higher-order derivatives that enabled the simple expression for the $k$th derivative of the composition $f_s\circ p$ in \eqref{eq:kth_equallyspaced}. 
For general parameterized curves $p(t)$, \eqref{eq:first_derivative} always holds true; but after subsequent applications of yhe product and chain rule, higher-order derivative expressions become increasingly more unwieldy.
For instance, the second derivative of $(f_s\circ p)$ with a general $p(t)$ can be computed as 
$$(f_s\circ p)^{(2)}(t) = \displaystyle\sum_{i=1}^n\left[\left[\displaystyle\sum_{j=1}^n \partial_j \partial_i f_s(p(t))[p^{(1)}(t)]_i[p^{(1)}(t)]_j \right] + \partial_i f_s(p(t)) [p^{(2)}(t)]_i \right].$$
Higher-order derivatives will involve proportionally more nested sums, but we can minimally see that an expression for $(f_s\circ p)^{(k)}(t)$ will involve the summands 
\begin{equation}\label{eq:s1}
S_1 := \displaystyle\sum_{(i_1,i_2,\dots, i_k)\in\{1,2,\dots,n\}^k} \partial_{i_1}\partial_{i_2}\cdots\partial_{i_k} f_s(p(t))[p^{(1)}(t)]_{i_1} [p^{(1)}(t)]_{i_2} \dots [p^{(1)}(t)]_{i_k}
\end{equation}
and
\begin{equation}\label{eq:sk}
S_k := \displaystyle\sum_{i=1}^n \partial_i f_s(p(t))[p^{(k)}(t)]_i.
\end{equation}
We now fix a choice of $[p(t)]_i$ as the \emph{unique} degree-$m$ polynomial that satisfies $[p(j)]_i = y^j_i$ for $j=0,\dots,m$.
The Lagrange form of such a polynomial is given by 
$$p_i(t) = \displaystyle\sum_{j=0}^m y^j_i L_j(t), 
\quad \text{ where } \quad 
L_j(t) := \displaystyle\prod_{\ell=0, \ell\neq j}^m \frac{t - \ell}{j - \ell}. $$
Thus,
if we let
$$h = \displaystyle\max_{i=1\dots,n} \max_{j=0,1,\dots,m} y^j_i,$$
we see that for all $k\in 0, 1, \dots, m$,
$$\displaystyle\max_{j\in 0,1,\dots, m} \max_{t\in[0,m]}
|L_j^{(k)}(t)|$$
is independent of $h$,
and so, for any $k\in 0, 1,\dots,m$ and for any $t\in[0,m]$,
$|p_i^{(k)}(t)| \leq Ch$
for some $C > 0$.
Thus, as $h\to 0^+$, $S_1\in\mathcal{O}(h^k)$ in \eqref{eq:s1} and $S_k\in\mathcal{O}(h)$ in \eqref{eq:sk}. 
The following theorem is immediate. 

\begin{theorem}
    \label{thm:kcd}
    Suppose $\{\xi(p(j)): j = 0,1,\dots,m\}$ are iid realizations of the random variable $\Xi(y^0,\delta)$. 
    Let $p(t)$ be a $k$-times continuously differentiable parameterization that satisfies
    $[p(j)]_i = y^j_i$ for $i=1,2,\dots,n$ and $j=0,1,\dots,m$. 
    Without loss of generality, let $y^0=0$. 
    If $f_s$ is $k$-times continuously differentiable on $[0,m]$, then there exist constants $\{C_\ell \geq 0: \ell = 0, 1, \dots k\}$ so that
    \begin{equation}
        \label{eq:rateless}
        \displaystyle\lim_{h\to 0^+} 
       \displaystyle\frac{\left|\gamma_k\mathbb{E}[\Delta^k(f(y^0))^2] - \epsilon_f^2\right|}
        {\left[\displaystyle\sum_{\ell=1}^k 
C_\ell h^\ell \right]^2}
        \leq
    \gamma_k f_s^{(k)}(y^0)^2,
    \end{equation}
     where $h = \max\{\|y^1\|_{\infty},\|y^2\|_{\infty}, \dots, \|y^m\|_{\infty}\}$. 
\end{theorem}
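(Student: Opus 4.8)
The plan is to reduce the left-hand side of \eqref{eq:rateless} to a single pointwise quantity and then feed it the $\mathcal{O}(h^\ell)$ scaling already isolated in the paragraph preceding the theorem. First I would observe that the chain of equalities culminating in \eqref{eq:int2} never used the special form \eqref{eq:equallyspaced}; it required only that $p$ be smooth and interpolating and that the noise be iid with finite variance. Hence \eqref{eq:int2} holds verbatim for our general Lagrange interpolant, giving
$$\gamma_k\Exp[\Delta^k f(p(0))^2] - \epsilon_f^2 = \gamma_k\,\Delta^k f_s(p(0))^2.$$
Since $f_s$ is $k$-times continuously differentiable and $p$ is smooth, the composition $g := f_s\circ p$ is $k$-times continuously differentiable on $[0,m]$, so \Cref{lem:mvt} applied to $g$ supplies a $\zeta\in(0,m)$ with $\Delta^k f_s(p(0)) = g^{(k)}(\zeta)$. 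Thus the numerator appearing in \eqref{eq:rateless} equals exactly $\gamma_k\,g^{(k)}(\zeta)^2$, and the entire problem collapses to bounding $|g^{(k)}(\zeta)|$ by a polynomial in $h$ of the required form.

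Next I would expand $g^{(k)}$ by repeated application of the product and chain rules (the multivariate Faà di Bruno formula), organizing the resulting terms by the number $\ell$ of time-derivative factors of components of $p$ that each contains. A term indexed by a partition of $\{1,\dots,k\}$ into $\ell$ blocks of sizes $b_1,\dots,b_\ell$ pairs an $\ell$th-order partial derivative of $f_s$ with a product $[p^{(b_1)}]\cdots[p^{(b_\ell)}]$; the two extreme groups $\ell=k$ and $\ell=1$ are precisely $S_1$ in \eqref{eq:s1} and $S_k$ in \eqref{eq:sk}. Invoking $y^0=0$, every node obeys $\|y^j\|_\infty\le h$, and since the Lagrange nodes $0,1,\dots,m$ are fixed the basis derivatives $L_j^{(b)}$ are independent of $h$; this is exactly the estimate $|p_i^{(b)}(t)|\le Ch$ already established in the text, so each $\ell$-block group is $\mathcal{O}(h^\ell)$. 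Collecting the finitely many constants (built from uniform bounds on the partial derivatives of $f_s$ near $p(0)$ and on the $L_j^{(b)}$) into $C_\ell\ge 0$ yields the uniform estimate $|g^{(k)}(\zeta)| \le \sum_{\ell=1}^k C_\ell h^\ell$ for all small $h$.

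With this in hand the conclusion is nearly mechanical: squaring gives $\gamma_k\,g^{(k)}(\zeta)^2 \le \gamma_k\bigl[\sum_{\ell=1}^k C_\ell h^\ell\bigr]^2$, so dividing by the denominator of \eqref{eq:rateless} and letting $h\to 0^+$ (during which $\zeta\to 0$, so all partial derivatives of $f_s$ are evaluated at $p(0)=y^0$) bounds the limiting ratio. The hard part will be matching the right-hand side $\gamma_k f_s^{(k)}(y^0)^2$ rather than a crude universal constant: since the lowest power $h^1$ dominates as $h\to 0^+$, the limit is governed by the leading coefficient of the $S_k$-type group together with $C_1$, and the freedom in choosing the $C_\ell$ must be used to identify this leading $h$-content with the $k$th-order directional-derivative information of $f_s$ at $y^0$, exactly paralleling the clean equality produced by \eqref{eq:kth_equallyspaced} in the proof of \Cref{thm:kcd_equallyspaced}. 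Verifying that the remaining lower-block contributions are genuinely absorbed into the subdominant terms $C_\ell h^\ell$ of the denominator, and do not corrupt this identification, is the only delicate bookkeeping; it reduces to the term-by-term order counting sketched above, which is why the author can reasonably call the statement immediate.
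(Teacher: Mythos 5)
Your proposal is correct and follows essentially the same route as the paper: the paper's proof simply reuses the argument of \Cref{thm:kcd_equallyspaced} up to the second line of \eqref{eq:int3} (that is, \eqref{eq:int2} plus \Cref{lem:mvt} applied to $f_s\circ p$) and then invokes the preceding observations that every derivative $p_i^{(b)}(t)$ of the Lagrange interpolant is $\mathcal{O}(h)$, so that the $\ell$-block Fa\`a di Bruno terms are $\mathcal{O}(h^\ell)$ and $(f_s\circ p)^{(k)}(\zeta)$ is bounded by $\sum_{\ell=1}^k C_\ell h^\ell$, exactly as you argue. Your closing worry about matching the right-hand side $\gamma_k f_s^{(k)}(y^0)^2$ identifies a genuine looseness in the theorem's statement itself (the multivariate meaning of $f_s^{(k)}(y^0)$ and how the $C_\ell$ couple to the leading $S_k$-type term), but the paper's own two-sentence proof leaves this equally implicit, so your account is, if anything, more detailed than the original.
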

\begin{proof}
    The proof of \Cref{thm:kcd_equallyspaced} still holds up until the second line of \eqref{eq:int3}.
    Then, the remainder of the proof follows by our observations concerning the $k$th derivative $(f_s\circ p)^{(k)}$ and its dependence on $h$. 
\end{proof}

\section{Understanding \Cref{thm:kcd}} \label{sec:understanding}
Unfortunately, on the surface, \Cref{thm:kcd} implies that
as $h\to 0^+$, the denominator of \eqref{eq:rateless} is effectively $\mathcal{O}(h^2)$;
that is, the convergence rate of the estimator is independent of $k$, a disappointing result in light of \Cref{thm:kcd_equallyspaced}. 
We stress that \Cref{thm:kcd} is indeed pessimistic, since we have essentially assumed that the derivatives of $f_s$ and the (derivatives of) Lagrange polynomials evaluated at a sequence of $\zeta$ that would correspond to a convergent sequence $h\to 0^+$ are always large enough to attain some bound. 
One would more likely expect an ``average case" behavior to occur, where average is with respect to some unspecified and complicated distribution on both $k$-times continuously differentiable functions $f_s$, and on a realization of points $\{y^0, y^1, \dots, y^m\}$ corresponding to each $h$ in the sequence $h\to 0^+$. 

Still, \Cref{thm:kcd} highlights that, as one might intuitively expect, the price one pays for estimating noise along a parametric curve, as opposed to the ``perfect" equally spaced, collinear geometry exemplified in \Cref{thm:kcd_equallyspaced}, is that the quality of the estimates returned by \texttt{ECNoise} is more strongly dependent on the distance parameter $h$.
Moreover, via the constants $C_{\ell}$, we expect noise level estimates to be more dependent on the problem dimension $n$, \emph{provided all of the mixed partial derivatives of order up to $k\leq m$ are nontrivial}. 

To illustrate this dependence on $h$ and higher-order derivatives with a synthetic problem, we consider, for $m=6$, the function
\begin{equation}
    f_s(x) = \left(\displaystyle\sum_{i=1}^n x_i\right)^m
    \label{eq:synthetic}
\end{equation}
and generate additive noise $\xi\sim\mathcal{N}(0,10^{-3})$
so that $f(x) = f_s(x) + \xi$. 
We then run the same experiment as in \Cref{sec:motivation} but with this $f_s(x)$ and varying the values of $n$ and $h$.  
Results are shown in \Cref{fig:bigfigure}.
This simple experiment exhibits the trends we expected to observe: as $n$ and $h$ increase, the gap in performance between the standard use of \texttt{ECNoise} and the use of arbitrary points as input to \texttt{ECNoise} increases. 
Remarkably, and as is expected from \Cref{thm:kcd}, if we repeat the same experiment but with a quadratic objective $f_s(x) = x^\top x$, the empirical cdfs for the two uses of \texttt{ECNoise} virtually overlap for all values of $h$ and $n$.
Moreover, a two-sample Kolmogorov-Smirnov test fails to reject the null hypothesis that the underlying cdfs are equal for all values of $h$ and $n$. 
In light of \Cref{thm:kcd}, this outcome may be explained by observing that all derivatives of $f_s(x) = x^\top x$ of order greater than 2 are zero.

\begin{figure}[ht!]
    \centering
    \includegraphics[width=.99\textwidth]{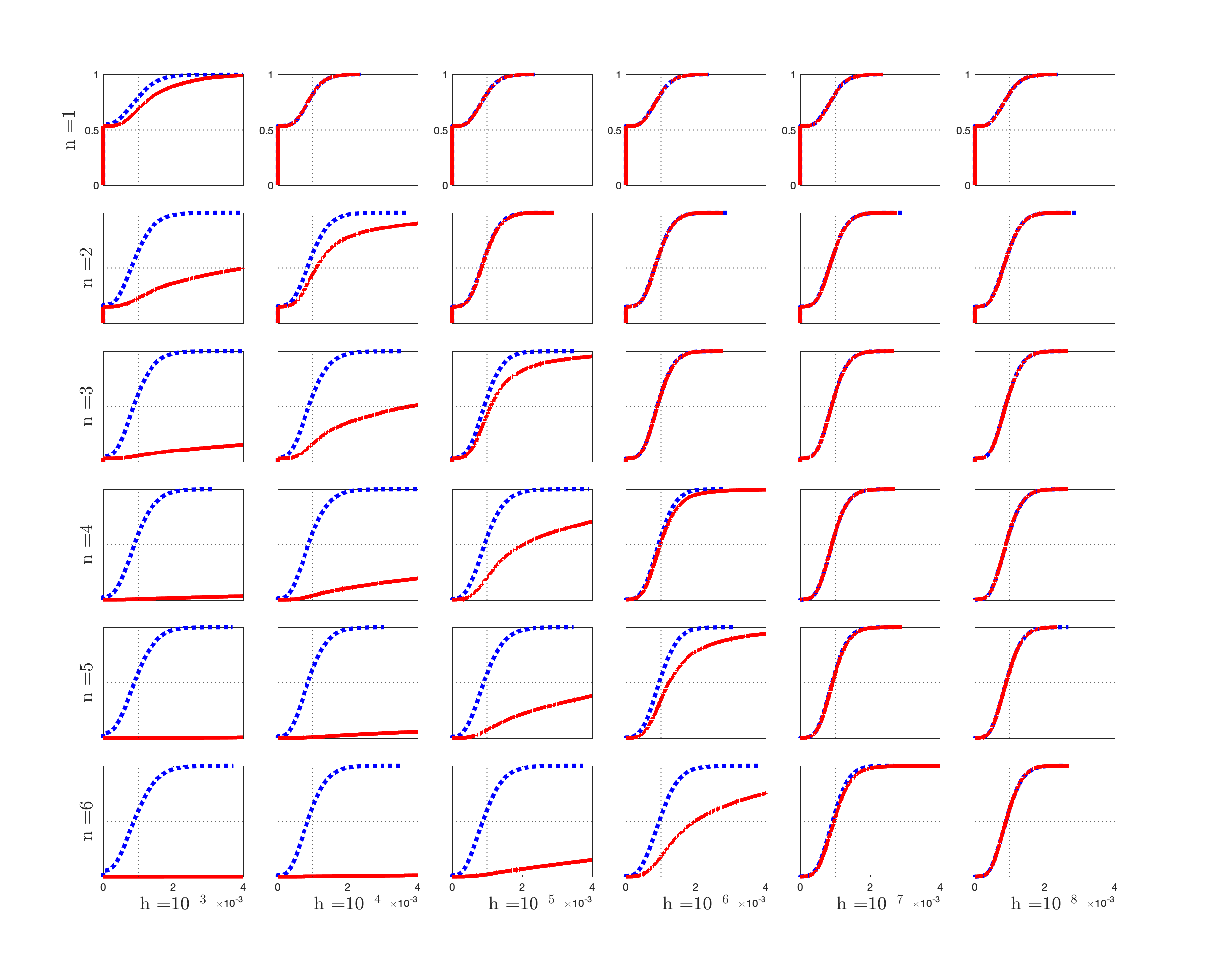}
    \caption{ Comparing the ``standard" use of \texttt{ECNoise} (in dashed blue) to arbitrary point selection (in solid red) on the synthetic problem with smooth part defined by \eqref{eq:synthetic} over varying values of $h$ and $n$.
    These plots are empirical cumulative density functions, over 10,000 trials, of the noise level estimate returned by \texttt{ECNoise}. 
    We note that \texttt{ECnoise} may decline to provide a noise level estimate if it detects $h$ is too large. 
    In this case the noise level estimate is set to 0. 
    This accounts for, in the larger values of $h$ on the left of this grid, the left-hand side of the empirical cumulative distribution function beginning at a probability value greater than 0. 
    The limit of the $x$-axis is chosen as 4 times the correct value of $\epsilon_f = 10^{-3}$, which was the threshold specified for successful noise estimation specified in \cite{more2011estimating}. 
    We also show, in each subplot via black dotted lines, the location of $\epsilon_f=10^{-3}$ on the $x$-axis and the location of $0.5$ on the $y$-axis, since we expect the median of the noise level estimate distribution to lie around $10^{-3}$.}
    \label{fig:bigfigure}
\end{figure}

\section{A Practical Method for Down-selecting Sample Points}\label{sec:milp}
\Cref{thm:kcd} additionally provides \emph{practical} insight into how to use \texttt{ECNoise} with arbitrary input points.
To simulate how we anticipate using \texttt{ECnoise} within a typical derivative-free optimization algorithm for noisy objective functions, we suppose that we have previously evaluated an objective function at points $\{y^0, y^1, \dots, y^M\}$ and we wish to choose a subset of size $m$ of them to provide as input to \texttt{ECNoise}, in addition to $y^0$. 
Whereas we previously considered the Lagrange form of each interpolating polynomial $p_i(t)$, we now consider the Newton form of each $p_i$, that is,
$$p_i(t) = \displaystyle\sum_{j=0}^m [y_i^0, \dots, y_i^j] \omega_j(t), 
\quad
\text{ where }
\quad
\omega_j(t) = \displaystyle\prod_{\ell=0}^j (t - \ell)
$$
and $[y_i^0, \dots, y_i^j]$ denotes the $j$th divided difference of the data $\{y_i^0, y_i^1, \dots, y_i^m\}$ (see a previous footnote). 
In \Cref{sec:understanding} we considered how the $C_\ell$ terms appearing in \eqref{eq:rateless} were functions of higher-order derivatives of $f_s$. 
We see from this analysis, however, that each $C_\ell$ also depends on derivatives of $p_i(t)$. 
In light of the Newton form for $p_i(t)$, to bound all derivatives of $p_i(t)$, it is beneficial to choose $\{y^0, y^1,\dots, y^m\}$ in such a way that it minimizes the magnitudes of all $k$th divided differences. 

To realize this minimization of coefficients, we provide
a mixed-integer linear program, which may be viewed as an assignment problem with additional constraints. 
In particular, given the $M$ sample points, we wish to assign at most one point to each of $j=1,\dots,m$ in the time variable for the parametric curve. 
We will denote by $\{z_{j,q}: j\in\{1,\dots,m\}, q\in\{1,\dots,M\}\}$ the set of binary variables which take the value 1 when point $q$ is assigned to time $j$ in the curve, and 0 otherwise.
For a fixed selection of $m$ points denoted $c^1,\dots,c^m$, 
we can describe, for the $i$th coordinate, the $k$th divided difference $[c_i^0, c_i^1, \dots, c_i^k]$ for each of $k=0,1,\dots,m$ 
via the formula
$$[c_i^0,c_i^1,\dots,c_i^k] = \displaystyle\sum_{j=0}^k (-1)^j {k \choose j} / j! c_i^{j},$$
which is a linear equality in $c_i^{0},c_i^{ 1},\dots,c_i^{m}$. 
By our reasoning at the end of \Cref{sec:theory}, a reasonable choice of an objective for the model is
$$\displaystyle\min_{z} \max_{i\in\{1,\dots,n\}} \max_{j\in\{1,\dots,m\}} \left|[c_i^{ 0}, c_i^{1}, \dots, c_i^{ j}]\right|.$$
The benefit of employing this particular objective is that the optimization problem remains a mixed-integer linear program (MILP).
While MILPs are well known to be NP-hard, in standard use cases of expensive derivative-free optimization, we do not expect either the number of available function evaluations $M > m$ or the problem dimension $n$ to be particularly large, so this modified assignment problem will generally be tractable, especially in settings where obtaining additional function evaluations is very expensive. 
In fact, in the experiment in this section, we solved \eqref{eq:milp_R} in \texttt{BARON} \cite{baron};
on a personal computer, no solution took significantly more than one second.  
For the sake of  clarity, we provide the full statement of the MILP in \eqref{eq:milp}, noting that the absolute values and maxima present in the objective function of \eqref{eq:milp} can be formulated as linear inequalities: 
\begin{equation}
\label{eq:milp}
\begin{array}{rl}
\displaystyle\min_{z} & \displaystyle\max_{i\in\{1,\dots,n\}} \displaystyle\max_{j\in\{1,\dots,m\}} |\omega_{i,j}|\\
\text{subject to} & \displaystyle\sum_{j=1}^m z_{j,q} \leq 1 \quad \forall q = 1,\dots,M\\
& \displaystyle\sum_{q=1}^M z_{j,q} = 1 \quad \forall j=1,\dots,m\\
& c_i^j = \displaystyle\sum_{q=1}^M y^q_i z_{j,q} \quad \forall i = 1,\dots,n, \forall j=1,\dots,m\\
& \omega_{i,j} = \displaystyle\sum_{k=0}^j \left((-1)^k {j \choose k}/j!\right) c_i^j \quad \forall i = 1,\dots,n, \forall j=1,\dots,m .\\
\end{array}
\end{equation}

We additionally consider augmenting \eqref{eq:milp} by allowing a user to specify a budget of $R\leq m$ points that must be \emph{reused} from $\{y^1,y^2,\dots,y^M\}$, while each of the remaining $m-R$ \emph{free} points can be chosen to be any point $\phi\in\Reals^n$ satisfying $\|c^0 - \phi\|_{\infty} \leq h$. That is, we solve
\begin{equation}
    \label{eq:milp_R}
    \begin{array}{rl}
\displaystyle\min_{z, \phi} & \displaystyle\max_{i\in\{1,\dots,n\}} \displaystyle\max_{j\in\{1,\dots,m\}} |\omega_{i,j}|\\
\text{subject to} & \displaystyle\sum_{j=1}^m z_{j,q} \leq 1 \quad \forall q = 1,\dots,M\\
& \displaystyle\sum_{q=1}^M z_{j,q} \leq 1 \quad \forall j=1,\dots,m\\
& \displaystyle\sum_{j=1}^m \sum_{q=1}^M z_{j,q} = R\\
& -h\left(1 - \displaystyle\sum_{q=1}^M z_{j,q}\right) 
\leq \phi_i^j \leq 
h \left(1 - \displaystyle\sum_{q=1}^M z_{j,q}\right) \quad
\forall i = 1,\dots,n, \forall j=1,\dots,m\\
& c_i^j = \displaystyle\sum_{q=1}^M y^q_i z_{j,q} + \phi_i^j \quad \forall i = 1,\dots,n, \forall j=1,\dots,m\\
& \omega_{i,j} = \displaystyle\sum_{k=0}^j \left((-1)^k {j \choose k}/j!\right) c_i^j \quad \forall i = 1,\dots,n, \forall j=1,\dots,m,\\
\end{array}
\end{equation}
which is equivalent to \eqref{eq:milp} when $R=m$. 

In \Cref{fig:compare_reuse} we rerun the experiment that corresponds
with the $n=6, h=10^{-6}$ case in \Cref{fig:bigfigure}.
This time, in each trial, after generating $y^0$, we choose $M=50$ points from a uniform random distribution on $\{x: \|x - y^0\|_{\infty} \leq h$ and solve \eqref{eq:milp_R} to choose a subset of size $m$ with varying reuse budget $R$.
We additionally show the results of the same experiment but with $m=12$ points used in \texttt{ECNoise}, instead of $m=6$.
As expected, as $R$ decreases---that is, we choose to evaluate increasingly more points outside of those given in $\{y^1,\dots,y^M\}$---the empirical cdf approaches that corresponding to the standard use of \texttt{ECNoise}. 
The improvement from allowing a single free point is particularly stark in the $m=12$ case. 

\begin{figure}
    \centering
    \includegraphics[width=.49\textwidth]{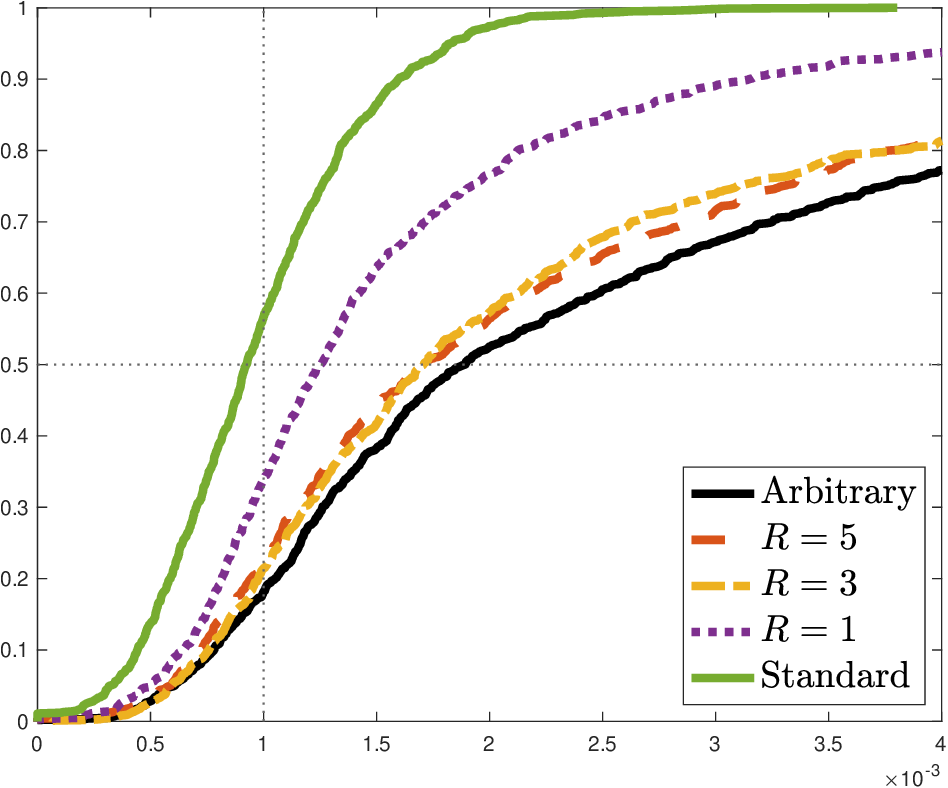}
    \includegraphics[width=.49\textwidth]{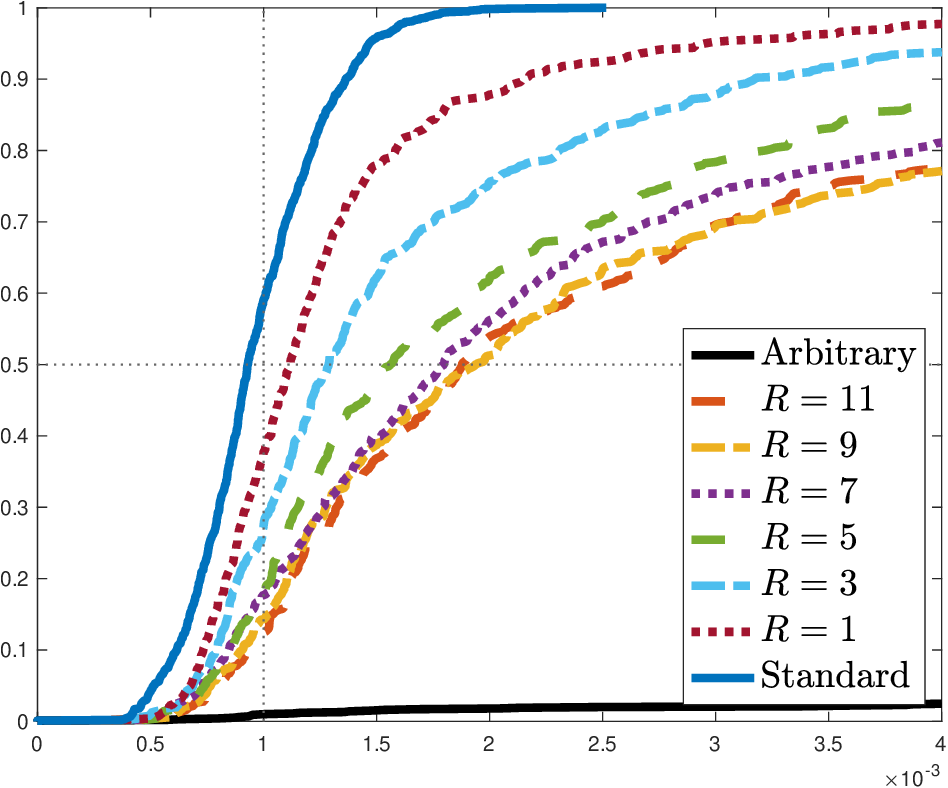}
    \caption{Repeating the experiment in \Cref{fig:compare_reuse} corresponding to $n=6$ and $h=10^{-6}$, with the same empirical cdfs.
    For reference, we continue to display empirical cdfs for the ``standard" use of \texttt{ENnoise} and the use of \texttt{ECNoise} with a randomly generated set of points as in the prior experiment.
    This time, in each trial, we also solve the MILP in \eqref{eq:milp_R} 
    with the specified $R$ and the identical arbitrary point set used to generate the noise estimate in the arbitrary case. 
    The left plot shows $m=6$, while the right plot shows $m=12$. 
    \label{fig:compare_reuse}}
\end{figure}

\section*{Acknowledgments} This work was supported in part by the U.S.~Department of Energy, Office of Science, Office of Advanced Scientific Computing Research Applied Mathematics under Contract No.~DE-AC02-06CH11357 .

\bibliographystyle{unsrt}
\bibliography{main}

\framebox{\parbox{.90\linewidth}{\scriptsize The submitted manuscript has been created by
        UChicago Argonne, LLC, Operator of Argonne National Laboratory (``Argonne'').
        Argonne, a U.S.\ Department of Energy Office of Science laboratory, is operated
        under Contract No.\ DE-AC02-06CH11357.  The U.S.\ Government retains for itself,
        and others acting on its behalf, a paid-up nonexclusive, irrevocable worldwide
        license in said article to reproduce, prepare derivative works, distribute
        copies to the public, and perform publicly and display publicly, by or on
        behalf of the Government.  The Department of Energy will provide public access
        to these results of federally sponsored research in accordance with the DOE
        Public Access Plan \url{http://energy.gov/downloads/doe-public-access-plan}.}}

\end{document}